\newcommand{\COLORON}{1}
\newcommand{\NOTESON}{0}
\newcommand{\Debug}{0}
\newcommand{\comment}[1]{}
\newcommand{\COMMENT}[1]{}
\definecolor{darkgray}{rgb}{0.3,0.3,0.3}
\newtheorem{proposition}{Proposition}[section]
\newtheorem{theorem}[proposition]{Theorem}
\newtheorem{lemma}[proposition]{Lemma}
\newtheorem{examp}[proposition]{Example}
\newcommand{\FIG}{0}
\newcommand{\note}[1]{ 

	\ 

	{\color{blue} \hspace*{-60pt} NOTE: \color{Turquoise}{\small  \tt \begin{minipage}[c]{1.1\textwidth}  #1 \end{minipage} \ignorespacesafterend }} 
	
	\ 
	
	}
\else \newcommand{\note}[1]{} \fi
\newcommand{\afsubm}[1]{ \ifnum \Debug = 1 {\mymargin{#1}}
\fi} 
\renewcommand{\color}[1]{}
\newcommand{\R}{\ensuremath{\mathbb R}}
\newcommand{\ce}{\ensuremath{\mathcal E}}
\newcommand{\sig}{\ensuremath{\sigma}}
\newcommand{\g}{\ensuremath{G\ }}
\newcommand{\G}{\ensuremath{G}}
\newcommand{\Ex}{\mathbb E}
\newcommand{\Lr}[1]{Lemma~\ref{#1}}
\newcommand{\Tr}[1]{Theorem~\ref{#1}}
\newcommand{\Sr}[1]{Section~\ref{#1}}
\renewcommand{\iff}{if and only if}
\newcommand{\Fe}{For every}
\newcommand{\st}{such that}
\newcommand{\wrt}{with respect to}
\newcommand{\labequ}[2]{ \begin{equation} \label{#1} #2 \end{equation} } 
\newcommand{\labtequ}[2]{ \begin{equation} \label{#1} 	\begin{minipage}[c]{0.9\textwidth}  #2 \end{minipage} \ignorespacesafterend \end{equation} }
\newcommand{\mymargin}[1]{
  \marginpar{%
    \begin{minipage}{\marginparwidth}\small%
      \begin{flushleft}%
        {\color{blue}#1}%
      \end{flushleft}%
   \end{minipage}%
  }%
}%
\newcommand{\mySection}[2]{}
\newlength{\originalbase}
\newcommand{\spacing}[1]{\setlength{\baselineskip}{#1\originalbase}}
\newcommand{\C}{{\rm C}}
\newcommand{\CR}{{\rm CR}}
\newcommand{\Res}{{\cal R}}
\newcommand{\Con}{{\cal C}}
\begin{document}
\spacing{1.5}

\newcommand{\comxy}{\ensuremath{{\mathbb E}T^{x \leftrightarrow y}}}
\newcommand{\effrxy}{\ensuremath{\Res^{x y}}}
\newcommand{\cmfwxy}{\ensuremath{T^{x y}_{A\rightarrow}}}
\newcommand{\cmfwyx}{\ensuremath{T^{x y}_{A\leftarrow}}}
\newcommand{\cminxy}{\ensuremath{T^{x  y}_{A}}}
\newcommand{\cmaxxy}{\ensuremath{T^{x y}_{A\leftrightarrow}}}
\newcommand{\Ecmfwxy}{\ensuremath{\Ex \cmfwxy}}
\newcommand{\Ecmfwyx}{\ensuremath{\Ex \cmfwyx}}
\newcommand{\Ecminxy}{\ensuremath{\Ex \cminxy}}
\newcommand{\Ecmaxxy}{\ensuremath{\Ex \cmaxxy}}
\newcommand{\andcom}[1]{\ensuremath{\overleftrightarrow{#1}}-commute}

\title{New Bounds for Edge-Cover by Random Walk}

\author{Agelos Georgakopoulos\thanks{Department of Mathematics,
Technische Universit\"{a}t Graz, Austria; georgakopoulos@tugraz.at. The first author acknowledges the kind hospitality of the Dartmouth College and the second author.}
\, and Peter Winkler\thanks{Department of Mathematics, Dartmouth,
Hanover NH 03755-3551, USA; peter.winkler@dartmouth.edu.  Research
supported by NSF grant DMS-0901475.}}

\maketitle

\begin{abstract}
We show that the expected time for a random walk on a (multi-)graph $G$ to traverse
all $m$ edges of $G$, and return to its starting point, is at most $2m^2$;
if each edge must be traversed in both directions, the bound is $3m^2$.
Both bounds are tight and may be applied to graphs with arbitrary edge lengths,
with implications for Brownian motion on a finite or infinite network of
total edge-length $m$.
\end{abstract}

\section{Introduction}
\subsection*{Overview}

The expected time for a random walk on a graph $G$ to hit all the vertices of $G$
has been extensively studied by probabilists, combinatorialists and computer scientists;
applications include the construction of universal traversal sequences \cite{AKLLR, Br},
testing graph connectivity \cite{AKLLR, KR}, and protocol testing \cite{MP}.
It has also been studied by physicists interested in the fractal structure of the uncovered
set of a finite grid; see \cite{DPRZ} for references and for an interesting relation between
the cover time of a finite grid and Brownian motion on Riemannian manifolds.

Here we consider the time to cover all {\em edges} of $G$, and moreover we take as the
fundamental parameter the number $m$ of edges of $G$, rather than the number of vertices.
Indeed, if the edges are taken to be of various lengths, then the number of vertices is no
longer of interest and the total edge-length (also denoted by $m$) becomes the natural
parameter by means of which to bound cover time.

\subsection*{Earlier Results}
Let $G= \langle V,E \rangle$ be a connected, undirected graph (possibly with loops and multiple edges),
where $|V|=n$ and $|E|=m$.  Let $r \in V$ be a distinguished ``root'' vertex.  A (simple)
{\em random walk} on $G$ beginning at $r$ is a Markov chain on $V$, defined as follows:
if the walk is currently at vertex $x$ it chooses an edge incident to $x$ uniformly at
random, then follows that edge to determine its next state.

The expected value of the least time such that all vertices in $V$ have been hit by the
random walk will be denoted by $\C^V_r(G)$ or simply $\C^V_r$, and we call this number the
(expected) {\em vertex cover time} of $G$ from $r$.  The vertex cover time $\C^V(G)$ of $G$ is defined
to be the maximum of $\C^V_r(G)$ over all $r \in V$.

Upper bounds on $\C^V(G)$ have been widely sought by the theoretical computer science community, and many results
obtained.  The cover time of a simple graph (no loops or multiple edges) was shown \cite{AKLLR}
to be at most cubic in $n$; more recently Feige \cite{F2} refined the bound to $\frac{4}{27} n^3$
plus lower-order terms, which is realized by the same ``lollipop'' graph that maximizes expected
hitting time \cite{BW1}---namely, a clique on $\frac23 n$ vertices attached to one end of a path
on the remaining vertices.

In fact \cite{AKLLR} shows that $\C^V(G)$ is O$(mn)$, thus trivially O$(m^2)$, even if the walk is
required to return to the root.  The expected time to cover all vertices and return to the
root is called the (vertex) {\em cover and return time}, here denoted by $\CR^V_r$; it is often easier
to work with than $\C^V_r$.  Among simple graphs (no loops or multiple edges) with $m$ edges,
the path $P_{m+1}$ on $m$ edges---starting in the middle---provides the greatest vertex cover time,
namely $m^2 + \lfloor m/2 \rfloor \lceil m/2 \rceil \sim \frac54 m^2$ \cite{F2}.
The path started at one end had earlier been shown \cite{BW2} to have the strictly
greatest vertex cover and return time among all trees with $m$ edges.

The expected time for a random walk beginning at $r$ to cover all the {\em edges} of $G$ will
be denoted by $\C^E_r(G)$; if the walk is required to cover all the {\em arcs}, that is,
to traverse every edge of $G$ in both directions, the expected time is denoted by $\C^A_r(G)$.
If the walk must return to $r$, we use $\CR^E_r$ and $\CR^A_r$ as above.  Edge-cover is for
some purposes more natural than vertex-cover; for example, it makes more sense on a graph
with loops and multiple edges, where there is no bound on any kind of cover based only
on the number $n$ of vertices, but there are natural, tight bounds (as we shall see) on
$\CR^E$ and $\CR^A$ in terms of the number of edges.

Bounds on expected edge-cover times were obtained by Bussian \cite{B} and Zuckerman \cite{Z1,Z2}.
Bussian noted that to cover the edges of a graph $G$, it suffices to hit each vertex somewhat
more often than its degree. Recently Ding, Lee and Peres \cite{DLP} proved a conjecture of
Winkler and Zuckerman \cite{WZ} saying roughly that the expected time to hit every vertex
{\em often} is no more than a constant times the vertex-cover time, which for a regular
graph is bounded by $6n^2$ \cite{KLNS}; it follows that the edge- (or arc-) cover time for
a regular graph is O$(mn)$, although the constant may be awful.  For general graphs,
Zuckerman \cite{Z2} succeeded in showing $\CR^A(G) \le 22 m^2$, using probabilistic methods.  

\subsection*{Our Results}

The main result of this paper is that if \g is a multigraph with $m$ edges then the expected
time it takes for a simple random walk started at any vertex to traverse all the edges of $G$
(and return to its starting point) is at most $2m^2$; in fact, we can even fix an orientation
of each edge beforehand, and $2m^2$ steps will be enough on average to traverse each edge in its
pre-specified direction.  If instead we insist that each edge be traversed both ways, then we
have to wait at most $3m^2$ steps on average.

As an intermediate step in our proof of the above bounds we obtain a refinement of a well-known formula of Chandra et.\ al.\ \cite{CRRST} concerning the commute time of random walk.

These results apply more generally if the edges of \g are assigned positive real lengths and
the transition probabilities of the random walk, as well as the time it takes to make a step,
are appropriately adapted.  In that case the above bounds apply when $m$ is the sum of the
lengths of the edges of $G$.

We call a graph with edge-lengths a {\em network}, and interpret it as an {\em electrical network}
by equating edge-length with resistance along an edge.  The usual connections between random walk
and electrical networks (as in \cite{DS,T} or see below) apply provided that in the definition of
random walk, the probability that an edge adjacent to the current position is traversed is
proportional to its conductance---that is, inversely proportional to its resistance (length). 
Such random walk has the same distribution as the sequence of vertices visited by standard Brownian motion.

To effect the correct scaling, and to make our results work for standard Brownian motion, we say that traversing
an edge of length $\ell$ takes time $\ell^2$. In fact the expected time for a standard Brownian particle to proceed from
one vertex to a neighboring vertex is {\em not} generally the square of the length of the edge;
that the ``$\ell^2$'' model for discrete random walk permits extending our results to Brownian
motion is a consequence of an averaging argument to be elucidated in Section~\ref{secBMf}.

For example, the expected time for a random walk
to proceed from one end to the other of a path $P_{m+1}$ of length $m$ (that is, a path consisting of
$m{+}1$ vertices and $m$ unit-length edges) is exactly $m^2$; this matches the time taken by
standard Brownian motion to travel from 0 to $m$ on the non-negative $X$-axis, and the time for our
generalized random walk to travel from one end to the other of any linear network, regardless
of the number and placement of vertices on it.  Thus it works in all senses to say that the
expected edge-cover time $\C^E_a(P_{m+1})$ of a path of length $m$ from an endpoint $a$ is $m^2$. 

Our bounds have interesting implications for Brownian motion on infinite networks. 
Motivated by earlier work of the first author \cite{G}, Georgakopoulos and Kolesko \cite{GK} study 
Brownian motion on infinite networks in which the total edge length $m$ is finite. Applied to this context, our results imply that Brownian motion will cover all edges in expected time at most $2m^2$, thus almost surely in finite time. See Section~\ref{secBMi} for more details.

\section{Definitions} \label{secD}

\subsection{Random Walk}

We will denote by $N=\langle G,\ell \rangle$ a network with underlying multigraph $G$ and
edge-lengths $\ell: E(G)\to \R^+$, also interpreted as resistances.  An {\em arc} of $N$
is an oriented edge; in our context, a loop comprises two arcs.  A {\em random walk} on
$N$ begins at some vertex $r$ and when at vertex $x$, traverses the arc $(x,y)$ to $y$
with probability 
\labequ{trpr}{
\frac{1/\ell(x,y)}{\sum_{z \sim x}1/\ell(x,z)}.
}

Our random walks take place in continuous time. The time it takes our random walker to perform a step,
i.e.\ to move from a vertex $x$ to one of its neighbors, depends on the lengths of the edges incident
with $x$. There are various ways to define this dependence. In this paper we will consider two natural
models for this dependence. Our main results will apply to both models with identical proofs, except
that it takes a different argument to prove \eqref{2lr} below.

\subsubsection*{The Brownian model}

Brownian motion on a line extends naturally to Brownian motion on a network (see, e.g., \cite{BC,BPY,FD,V}); when
at a vertex a Brownian particle moves with equal likelihood onto any incident edge (with the understanding that
an incident loop counts for this purpose as two incident edges).  The edges incident to a vertex constitute
a ``Walsh spider'' (see, e.g., \cite{Wa,BPY}) with equiprobable legs, and it is easily verified that
in such a setting the probability of traversing a particular incident edge (or oriented loop) first is proportional
to the reciprocal of the length of that edge.

In our model, we care only about where the particle is after an edge-traversal, and how long it took to
get there.  The latter should thus be the time taken by a Brownian particle to traverse a given edge incident
to its starting vertex, given that it traversed that edge first; this time is a random variable whose distribution
and expectation (the latter to be computed later) depend not only on the length of the specified edge, but on the
lengths of the other incident edges as well.

In the case of {\em simple random walk}, i.e.\ when all edges have length 1, the expected time for this random walk
to take a step is 1. 

\subsubsection*{The $\ell^2$ model}

In this model, the time it takes for the random walk to take a step is less random: traversing an edge of
length $\ell$ always takes time $\ell^2$. Thus time is governed by \eqref{trpr} alone.
The $\ell^2$ model and the Brownian model differ only in timing; probabilities of walks are identical, as in both cases the
next edge to be traversed is chosen according to the distribution in \eqref{trpr} for incident edges.
If all edges of the network are of the same length, then in expectation, timing is identical as well.
Readers interested only in simple random walk will lose nothing by assuming throughout that all
edges are of length 1.

\medskip

The expected time to cover all edges (respectively, arcs) of $N$ by a random walk (in either model)
will be denoted by $\C^E_r(N)$ (resp., $\C^A_r(N)$); to cover all edges or arcs
and return, by $\CR^E_r(N)$ or $\CR^A_r(N)$.  Maximizing over $r$ gives $\C^E(N)$, $\C^A(N)$,
$\CR^E(N)$, and $\CR^A(N)$. 

The {\em effective resistance} $\Res^{xy}$ between vertices $x$ and $y$ is defined in electrical
terms as the reciprocal of the amount of current that flows from $x$ to $y$ in $N$ when a unit
voltage difference is applied to them, assuming that each edge offers resistance equal to its length. See \cite{DS} or \cite{L} for the basic definitions concerning electrical networks.
Effective resistances sum in series: if all paths
from $x$ to $y$ go through $z$, then $\Res^{xy}= \Res^{xz} + \Res^{zy}$.  The reciprocals
of effective resistances, i.e., effective conductances, sum in parallel:  if $A$ and $B$ are
otherwise disjoint networks containing $x$ and $y$, then in the union of the two networks,
$$
\frac1{\Res^{xy}} = \frac1{\Res_A^{xy}} + \frac1{\Res_B^{xy}}.
$$

\section{Commute Times} \label{seccom}

The {\em commute time} $T^{x \leftrightarrow y}$ from vertex $x$ to $y$ in $N$ is the
(random) time it takes for a random walk to travel from $x$ to $y$ and back to $x$.
The expected commute time $\comxy$ between two vertices $x,y$ of a network has an elegant
expression proved in \cite{CRRST}, and well known for the case of unit edge lengths:
\labtequ{2lr}{$\comxy = 2 m \effrxy$,} where $m:= \sum_{e\in E(G)} \ell(e)$ is the total
length of the network and $\effrxy$ the effective resistance as defined above.

In fact a more general identity is proved in \cite{CRRST} (Theorem 2.2 there): 
suppose each traversal of an arc $\vec{e}$ comes with a cost $f(\vec{e})$, where $f$ is a possibly asymmetric cost function, and transition probabilities are still given by \eqref{trpr}. Then the expected cost of an $x$-$y$~commute is
$F \effrxy$, where $F$ is the sum of ${f(\vec{e})} / {\ell(e)}$ over all arcs $\vec{e}$.  Substituting
$\ell(e)^2$ for $f(\vec{e})$ gives the familiar $2 m \effrxy$
for commute time in the $\ell^2$ model, with $m$ now understood as the total length of the network.

That the same formula applies to the Brownian model follows by approximating edge-lengths with
rational numbers, then subdividing so that every edge has the same length; this has no effect
on commute time in the Brownian model but brings traversal times in line with the $\ell^2$ model
without changing the formula.

\medskip

In the rest of this section we refine \eqref{2lr} by differentiating
between various commute tours according to their behavior \wrt\ subnetworks. We will need this refinement in order to prove our main results in the next section.

Suppose that $N$ is the union of two subnetworks $A,B$ \st\ $A \cap B = \{x,y\}$
(as in Fig.~\ref{fig:1} below).
\begin{figure}[ht]
\epsfxsize240pt
$$\epsfbox{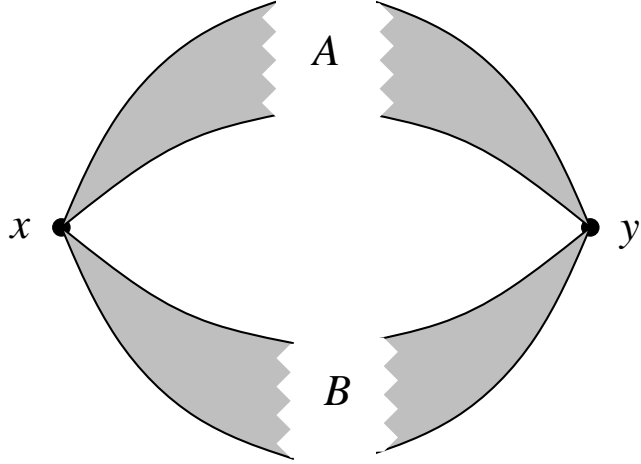}$$
\caption{Networks $A$ and $B$ meeting only at vertices $x$ and $y$.}\label{fig:1}
\end{figure}
For example, $A$ could be an $x$-$y$~edge and $B$ could be the rest of $N$; this is in fact the
case that is needed later.  We define the following events for random walks on $N$ starting at $x$:
\begin{enumerate}
\item An $A$-commute from $x$ to $y$ is a closed walk starting at $x$ and containing either
an $x$-$y$~subwalk via $A$ or a $y$-$x$~subwalk via $A$;
\item An $\overrightarrow{A}$-commute from $x$ to $y$ is a closed walk starting at $x$ and containing an $x$-$y$~subwalk via $A$;
\item An $\overleftarrow{A}$-commute from $x$ to $y$ is a closed walk starting at $x$ and containing a $y$-$x$~subwalk via $A$;
\item An \andcom{A}\ from $x$ to $y$ is a closed walk starting at $x$ and containing both an $x$-$y$~subwalk via $A$ and a $y$-$x$~subwalk via $A$.
\end{enumerate}
Define $\cminxy, \cmfwxy,\cmfwyx,\cmaxxy$ to be the time  for random walk on $N$ starting at $x$ to perform an $A$-commute,
a $\overrightarrow{A}$-commute, a $\overleftarrow{A}$-commute, and a \andcom{A}\ from $x$ to $y$, respectively.
Let $\Ecminxy, \Ecmfwxy,\Ecmfwyx,\Ecmaxxy$ denote the corresponding expected times.

\begin{theorem} \label{thcom} For every network $N=(G,\ell)$ and any two subnetworks $A,B$ having precisely two vertices $x,y$ in common,
\begin{enumerate}
\item \label{i}   $\Ecminxy = 2m \Res_A \frac{\Res_A + \Res_B}{2\Res_A + \Res_B}$;
\item \label{ii}  $\Ecmfwxy = 2m \Res_A \frac{2\Res_A + \Res_B}{2\Res_A + \Res_B} = 2m \Res_A =\Ecmfwyx$;
\item \label{iii} $\Ecmaxxy = 2m \Res_A\frac{3\Res_A + \Res_B}{2\Res_A + \Res_B}$,
\end{enumerate}
where $m:= \sum_{e\in E(G)} \ell(e)$ is the total edge-length, and $\Res_X$ is the effective resistance between $x$ and $y$
in the network $X$.
\end{theorem}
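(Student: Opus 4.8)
The plan is to reduce all three formulas to one Wald-type identity by cutting the walk into excursions between consecutive visits to $\{x,y\}$. Assume as usual that $N$ is finite and connected, so the walk is recurrent. Since $A\cap B=\{x,y\}$, each \emph{excursion} --- a maximal portion of the walk strictly between two consecutive visits to $\{x,y\}$, together with its two boundary steps --- lies wholly in $A$ or wholly in $B$; call it type $A$ or type $B$, and \emph{successful} if it connects the two distinct vertices $x,y$. Bundle the excursions leaving $x$ into \emph{$x$-rounds} $X_1,X_2,\dots$, where $X_i$ is the $i$-th maximal block of successive excursions from $x$ ending with the first successful one, and define $y$-rounds $Y_1,Y_2,\dots$ symmetrically; the walk is then the concatenation $X_1Y_1X_2Y_2\cdots$, and by the strong Markov property the rounds are mutually independent. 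Colour each round $A$ or $B$ according to the type of its (unique) successful excursion. First I would prove that the colours are i.i.d.\ and that every round --- of either kind --- gets colour $A$ with probability $\theta=\Res_B/(\Res_A+\Res_B)$.

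To compute $\theta$: an excursion from $x$ opens with a step along an $A$-edge with probability $c_A(x)/(c_A(x)+c_B(x))$, where $c_A(x),c_B(x)$ are the conductances at $x$ inside $A$ and inside $B$; conditioned on this the excursion evolves as a random walk in $A$ started at $x$, reaching $y$ before returning to $x$ with the escape probability $1/(c_A(x)\Res_A)$ (see e.g.\ \cite{DS}). Hence an excursion from $x$ is ``type $A$ and successful'' with probability $1/\big((c_A(x)+c_B(x))\Res_A\big)$, and ``type $B$ and successful'' with the analogous expression; as the colour of an $x$-round is the type of its \emph{first successful} excursion, $\theta=\tfrac{1/\Res_A}{1/\Res_A+1/\Res_B}=\tfrac{\Res_B}{\Res_A+\Res_B}$, and the same computation at $y$ gives the identical value.

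Next I would match the four commute times to round counts. Let $K_1$ be the first $x$-round with colour $A$ and $K_2$ the first $y$-round with colour $A$; these are independent geometric variables with parameter $\theta$. An $x$-$y$~subwalk via $A$ is, in effect, a successful type-$A$ excursion from $x$, so the earliest one is completed exactly when $X_{K_1}$ ends (the walk then sitting at $y$), and the first closed walk from $x$ containing it therefore ends precisely when $Y_{K_1}$ ends; likewise a $y$-$x$~subwalk via $A$ first occurs inside $Y_{K_2}$ and already terminates at $x$, so $\cmfwyx$ is the moment $Y_{K_2}$ ends. Hence $\cmfwxy$, $\cmfwyx$, $\cminxy$, $\cmaxxy$ are, respectively, the times needed to finish the initial block of rounds $X_1Y_1\cdots X_JY_J$ for $J=K_1$, $K_2$, $\min(K_1,K_2)$, $\max(K_1,K_2)$.

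Finally, for each such $J$ the event $\{J\ge i\}$ depends only on the colours of the rounds before the $i$-th pair, hence is independent of the durations $D(X_i),D(Y_i)$ (although these durations are themselves correlated with the $i$-th colours). Summing over $i$ therefore gives
\[
\Ex\Big[\sum_{i\le J}\big(D(X_i)+D(Y_i)\big)\Big]=\big(\Ex[D(X_1)]+\Ex[D(Y_1)]\big)\,\Ex[J]=\comxy\cdot\Ex[J]=2m\,\effrxy\,\Ex[J],
\]
where $\Ex[D(X_1)]+\Ex[D(Y_1)]=H(x,y)+H(y,x)$ is the expected commute time and the last equality is \eqref{2lr}. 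Since $A$ and $B$ are in parallel, $\effrxy=\Res_A\Res_B/(\Res_A+\Res_B)$; substituting $\Ex[K_1]=1/\theta$, $\Ex[\min(K_1,K_2)]=1/\big(\theta(2-\theta)\big)$, $\Ex[\max(K_1,K_2)]=(3-2\theta)/\big(\theta(2-\theta)\big)$ and $\theta=\Res_B/(\Res_A+\Res_B)$ yields (ii), then (i), then (iii) after routine algebra --- or one may derive (iii) from (i) and (ii) via $\cminxy+\cmaxxy=\cmfwxy+\cmfwyx$. I expect the delicate part to be the correspondence in the third paragraph, namely verifying that no shorter closed walk already contains the required subwalk (which rests on identifying ``via $A$'' subwalks with successful type-$A$ excursions), together with making the independence in the displayed identity precise.
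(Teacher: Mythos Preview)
Your argument is correct and follows essentially the same route as the paper: both proofs decompose the walk into successive $x$--$y$ commutes, observe that the forward and backward half of each commute is independently ``via $A$'' with probability $p_A=\theta=\Res_B/(\Res_A+\Res_B)$, and then combine Wald's identity with \eqref{2lr}. Your packaging via two independent geometric variables $K_1,K_2$ and $J=K_1,\ K_2,\ \min(K_1,K_2),\ \max(K_1,K_2)$ is a slightly tidier bookkeeping device---particularly for case~\ref{iii}, which the paper handles as $\Ex Y_{iii}=\Ex Y_i+q\,\Ex Y_{ii}$---but the content is identical.
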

{\em Remark:} Notice that the three expressions differ only in the coefficient of $\Res_A$ in the numerator.
\begin{proof}
In all cases, let the particle start at $x$ and walk on $N$ `forever', and consider the times when the particle
completes a commute from $x$ to $y$. We are interested in the expectation of the time $T$ when the first $A$- commute,
$\overrightarrow{A}$-commute, or  \andcom{A}\ is completed according which of the cases \ref{i} - \ref{iii} we are considering.

Note that any $A$-commute, $\overrightarrow{A}$-commute, or  \andcom{A}\ from $x$ to $y$ can be decomposed into a sequence of disjoint commutes.
Thus we can define a random variable $Y$ to be the number of $x$-$y$ commutes until the first $A$-commute, $\overrightarrow{A}$-commute,
or \andcom{A}\ from $x$ to $y$ is completed, according to which of the cases \ref{i} - \ref{iii} we are considering.
Let $X_i, i=1,2,\ldots$ be the duration of the $i$th $x$-$y$ commute. Then our stopping time $T$ satisfies
$$
T= \sum_{1\leq i \leq Y} X_i.
$$
 
Note that the expectation of each $X_i$ is $\comxy = 2 m \effrxy$ by \eqref{2lr}.
By Wald's identity \cite{W} the expectation of $T$ equals the expectation of $Y$ times the expectation of $X_i$, and so we have 
\labtequ{wald}{$\Ex T = 2 m \effrxy \Ex Y$.}
Thus it only remains to determine $\Ex Y$ in each of the cases \ref{i} - \ref{iii}.

In order to compute $\Ex Y$ it is useful to first calculate the probability $p_A$ that the first visit to $y$ of a random walk starting at
$x$ will be via $A$.  For this it is convenient to use the electrical network technique.  Disconnect $A$ and $B$
at their common vertex $y$ to obtain a network consisting of $A,B$ connected `in series' at $x$.  Denote by $y_A$
the vertex of $A$ corresponding to $y$, and by $\Con_A = 1/\Res_A$ the effective conductance between $x$ and $y$ in $A$;
similarly for $B$.  From \cite{DS} we have the probability that a random walk started at $x$ hits $y_A$ before $y_B$
is $\Con_A/\Con_B$, thus $p_A = \frac{\Con_A}{\Con_A + \Con_B}=\frac{\effrxy}{\Res_A}$.

By the same argument, a random walk from $y$  to $x$ will go via $A$ with the same probability $p_A$. 
We can now determine $\Ex Y$ in each of the three cases. For case \ref{ii}, note that a commute from $x$ to $y$
is a $\overrightarrow{A}$-commute if the forward trip is via $A$, which occurs with probability $p_A$.
Thus we have $\Ex Y =: \Ex Y_{ii} = 1/p_A= \frac{R_A}{\effrxy}$, since the expected number of Bernoulli trials
until the first success is the reciprocal of the success probability of one trial.  Plugging this into \eqref{wald}
yields $\Ex T =\Ecmfwxy= 2 m \Res_A$ as claimed. By similar arguments this expression also equals $\Ecmfwyx$.

For case \ref{i}, note that a commute from $x$ to $y$ fails to be an $A$-commute \iff\
both trips fail to be via $A$. Since the two trips are independent, the probability that a random $x$-$y$ commute
is an $A$-commute is $1-(1-p_A)^2= p_A(2-p_A)$.  By a similar argument as above
we obtain $\Ex Y =: \Ex Y_{i}= \frac{1}{p_A(2-p_A)}$ and so $\Ex T =: \Ecminxy = 2m  \effrxy \frac{R_A}{\effrxy}
\frac{1}{2-p_A} = 2m  R_A  \frac{1}{2-p_A} =  2m R_A \frac{1}{2-  R_B/({R_A + R_B})} = 2m R_A \frac{R_A + R_B}{2R_A + R_B}$.

Finally, to determine $\Ex Y$ in case \ref{iii} we argue as follows. To begin with, we have to make an expected $Y_{i}$
tries until we go via $A$  in at least one of the trips of some $x$-$y$ commute, i.e.\ until we achieve our first $A$-commute.
Then, unless our first $A$-commute was an \andcom{A}, we will have to make another $Y_{ii}$ tries in expectation to go via
$A$ in the other direction. By elementary calculations, an $A$-commute fails to be an \andcom{A}\ with probability
$q = \frac{2(1-p_A)}{(2-p_A)} =  2 \frac{R_A}{2R_A+R_B} $. Summing up, we have $\Ex Y =: \Ex Y_{iii} = \Ex Y_{i} + q \Ex Y_{ii}$.
Using our earlier calculations and \eqref{wald} this yields 
$\Ex T =: \Ecmaxxy = 2mR_A \left(\frac{R_A + R_B}{2R_A + R_B} + 2 \frac{R_A}{2R_A+R_B}  \right) = 2mR_A\frac{3R_A + R_B}{2R_A + R_B}$.

\end{proof}

\section{Main Results}

For a random walk on a network $N$ starting at a vertex $x$, define the random variable $\CR^E_x$, called the {\em edge cover-and-return time},
to be the first time when each edge of $N$ has been traversed and the particle is back at $x$.
Similarly, if $N$ is a digraph, we define $\CR^A_x$, called the {\em arc cover-and-return time}, to be the first time when each arc of $N$
has been traversed and the particle is back at $x$. Here, the directions of the edges do not affect the behavior of the random walk; the particle is allowed to traverse arcs backwards and its transition probabilities are always given by \eqref{trpr}. 

If $N$ is undirected we interpret it as a digraph by replacing each edge (loops included)
by an arc in each direction; thus, in that case, $\CR^A_x$ is the first time when each edge of $N$ has been traversed in both directions and
the particle is back at $x$.

\begin{theorem} \label{main}
Let $N=(G,\ell)$ be an undirected network and let $m:= \sum_{e\in E(G)} \ell(e)$. Then
\begin{enumerate}
\item \label{ci} $\Ex \CR^E(N) \leq 2m^2$ and
\item \label{cii} $\Ex \CR^A(N) \leq 3m^2$.
\end{enumerate}
\noindent
Moreover, if ${\overrightarrow N}$ is the result of orienting the edges of $N$ in any way, then
\begin{enumerate}
\addtocounter{enumi}{2}
\item \label{ciii} $\Ex \CR^A({\overrightarrow N}) \leq 2m^2$.
\end{enumerate}
\end{theorem}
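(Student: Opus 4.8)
The plan is to fix a root $r$, write down an explicit closed tour in $G$ from $r$ that forces every edge to be crossed in the required direction(s), and then simulate this tour by the random walk as a concatenation of commute‑type excursions, each governed by \Tr{thcom} applied with $A$ a single edge $e$ (so that $\Res_A=\ell(e)$) and $B$ the rest of the network. First note that (iii) implies (i): for any fixed orientation $\overrightarrow N$ of $N$ one has $\CR^E_r(N)\le\CR^A_r(\overrightarrow N)$ as random times for the same walk, since covering every arc of $\overrightarrow N$ in particular traverses every edge of $N$; hence it suffices to prove (iii) (for an arbitrary orientation) and (ii).

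For (iii), fix an orientation and a spanning tree $T$ of $G$ rooted at $r$, and let the tour be the depth‑first traversal of $T$ with, at one endpoint of each non‑tree edge $e=xy$ (say oriented $x\to y$), a side trip that crosses $e$ from $x$ to $y$. I would simulate the tour step by step, keeping the walk at the current tour vertex whenever a step begins, so that expected step‑durations add by the strong Markov property. A side‑trip step over a non‑tree edge $e=xy$ is realized as a $\overrightarrow{\{e\}}$‑commute from $x$ — run from $x$ until $e$ has been crossed from $x$ to $y$ and the walk is back at $x$ — which by \Tr{thcom}(ii) has expected duration exactly $2m\ell(e)$ and leaves the walk at $x$. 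A tree edge $e$ with parent endpoint $p$ and child endpoint $c$ is crossed twice by the tour (going down, then coming back up after the subtree at $c$ has been dealt with); I would realize whichever of the two crossings is in the prescribed direction as a \emph{directed first passage} over $e$ — run until $e$ is crossed in that direction, which leaves the walk at the correct endpoint — and the other crossing as a plain first passage between the endpoints of $e$. The arithmetic core is the identity
\[
\Ex\bigl[\text{directed first passage over }e\text{ from }p\text{ to }c\bigr]=2m\ell(e)-\Ex H(c,p),
\]
where $\Ex H(c,p)$ is the expected hitting time of $p$ from $c$: indeed the $\overrightarrow{\{e\}}$‑commute from $p$, of expected length $2m\ell(e)$ by \Tr{thcom}(ii), splits (strong Markov) into this directed first passage followed by a return to $p$ of expected length $\Ex H(c,p)$. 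Consequently the two tour steps over a tree edge $e$ together cost $\bigl(2m\ell(e)-\Ex H(c,p)\bigr)+\Ex H(c,p)=2m\ell(e)$ in expectation (and symmetrically if the prescribed direction is $c\to p$). Adding $2m\ell(e)$ over all tree edges and all non‑tree edges gives $\Ex\CR^A_r(\overrightarrow N)\le\sum_e 2m\ell(e)=2m^2$.

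For (ii) the same tour is used, but now every edge must be crossed in both directions. Non‑tree edges are dispatched by $\overleftrightarrow{\{e\}}$‑commutes, which by \Tr{thcom}(iii) cost $2m\Res_A\frac{3\Res_A+\Res_B}{2\Res_A+\Res_B}\le 3m\ell(e)$; a tree edge $e=pc$ can be crossed once each way by two directed first passages, for a total expected cost $4m\ell(e)-\bigl(\Ex H(c,p)+\Ex H(p,c)\bigr)=4m\ell(e)-2m\Res^{pc}$ by \eqref{2lr}, or else its two arcs can be covered by a single $\overleftrightarrow{\{e\}}$‑commute with the tour moving through $e$ via plain first passages, whichever is smaller. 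This is where the real difficulty sits, and the step I expect to be the main obstacle: $4m\ell(e)-2m\Res^{pc}$ exceeds the per‑edge budget $3m\ell(e)$ precisely when the tree edge $e$ is substantially short‑circuited by the rest of $G$ (i.e.\ $\Res^{pc}<\tfrac12\ell(e)$), so a naive term‑by‑term bound overshoots $3m^2$. One must either choose $T$ so that this never occurs, or — more robustly — absorb the deficit of such tree edges into the surplus left by the non‑tree $\overleftrightarrow{\{e\}}$‑commutes (whose expected time drops strictly below $3m\ell(e)$ whenever $\Res_B>0$), and verify, uniformly over all networks, that these surpluses always dominate; a Foster‑type identity $\sum_e\Res^{\,e}/\ell(e)=n-1$ looks like the right bookkeeping tool.

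Two routine points should be checked along the way. First, when an edge $e=pc$ has an endpoint of degree one, $A=\{e\}$ and $B$ (the rest of $G$) share only one vertex and \Tr{thcom} does not literally apply; one takes $\Res_B=\infty$ and notes that the facts we use — notably $\Ex T^{xy}_{A\rightarrow}=2m\Res_A$, whose statement is independent of $\Res_B$, and the directed‑first‑passage identity above — pass to this limit. Second, the additivity of expected step‑durations via the strong Markov property requires the walk's position to be deterministic at the start of each simulated step; this is immediate from the construction, since commutes end at their base vertex and directed first passages end at a prescribed endpoint.
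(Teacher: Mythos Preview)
Your argument for (iii)---and hence (i)---is correct and is essentially the paper's, repackaged. The paper avoids your tree/non-tree split: it fixes a single closed walk $\sigma$ that traverses every edge of $G$ once in each direction, and for each edge $e$ the two corresponding random-walk intervals concatenate to an $\overrightarrow{\{e\}}$- or $\overleftarrow{\{e\}}$-commute of expected length exactly $2m\ell(e)$ by \Tr{thcom}(ii). Your ``directed first passage $+$ hitting time'' for tree edges and ``$\overrightarrow{\{e\}}$-commute side trip'' for chords produce the same arithmetic, so there is no substantive difference here.

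For (ii) you correctly flag a gap and do not close it. The paper's route is to keep the same $\sigma$ but now require at \emph{every} epoch that the walk arrive via the current edge; it then asserts that for each $e$ the two intervals together constitute an $\overleftrightarrow{\{e\}}$-commute, so that \Tr{thcom}(iii) bounds their expected sum by $3m\ell(e)$. Notice, however, that this concatenation is literally your ``two directed first passages'' (from $x$ to $y$ via $e$, then from $y$ to $x$ via $e$), with expected length $4m\ell(e)-2m\Res^{xy}$ by the identity you derive. That walk is \emph{an} $\overleftrightarrow{\{e\}}$-commute, but not in general the \emph{first} one, and \Tr{thcom}(iii) computes the expectation of the first---so it furnishes a lower bound, not an upper bound, for the concatenation. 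Your worry that $4m\ell(e)-2m\Res^{xy}>3m\ell(e)$ when $e$ is heavily short-circuited is therefore not dispelled by dropping the tree/non-tree distinction: with two parallel edges of lengths $2$ and $1$ (so $m=3$, $\Res^{xy}=2/3$) one gets $\sum_e\bigl(4m\ell(e)-2m\Res^{xy}\bigr)=28>27=3m^2$. Thus the per-edge decomposition you and the paper both use does not by itself sum to $3m^2$, and some further idea---the global rebalancing you sketch, or something else---is genuinely required to finish (ii).
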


\begin{proof}
Our proof follows the lines of the `spanning tree argument' used in \cite{AF} for the vertex cover time, the main difference being
that we apply \Lr{thcom} instead of \eqref{2lr}.

Let \sig\ be a closed walk in $G$ starting at an arbitrary fixed root $r$ and traversing each edge of $G$ precisely once in each direction.
Such a walk always exists: if $G$ is a tree then a depth-first search will do, and if not then one can consider a spanning tree $T$ of $G$,
construct such a walk on $T$, and then extend it to capture the chords. 

We are going to use \sig\ in order to define epochs for the random walk in $N$ starting at $x$, and then apply \Tr{thcom}
to bound the time time spent between pairs of such epochs. These epochs will differ depending on which of the
two cases we are considering.

{\em Cases \ref{ci} and \ref{ciii}:}  We prove the stronger result that $\Ex \CR^A_r({\overrightarrow N}) \leq 2m^2$.
let $e_i, i=1,2,\ldots, 2|E(G)|$, be the $i$th edge traversed by \sig,
with endvertices $x_i,y_i$ appearing in \sig\ in that order. For $i=1,2,\ldots, 2|E(G)|$ define the $i$th epoch
$\tau_i$ as follows.  If $e_i$ is directed from $x_i$ to $y_i$, then $\tau_i$ is the first time after $\tau_{i-1}$ when
\sig\ is at $y_i$ and has gone there from $x_i$ using the edge $e_i$ in that step,
where we set $\tau_0=0$. If $e_i$ is directed the other way, then we just let $\tau_i$ be the first time after
$\tau_{i-1}$ when \sig\ is at $y_i$.

Note that at time $\tau_{2|E(G)|}$ our random walk is back to $r$ and has performed an arc cover-and-return tour.
Thus $\Ex \CR^E_r({\overrightarrow N})$ is at most the expectation of $\tau_{2|E(G)|}$. Now the latter can be bounded using
\Tr{thcom} \ref{ii} as follows.  \Fe\ edge $e=xy$ of \G, there are precisely two time intervals bounded by the above epochs
corresponding to $e$: If $j,k$ are the two indices for which $e_j=e_k=e$, then these are the  time intervals $I_e^1:= [\tau_{j-1},\tau_j]$
and $I_e^2:= [\tau_{k-1},\tau_k]$. Note that, by the definition of $\tau_i$, the motion of the random walker
in the union of these two intervals is in fact an $\overrightarrow{e}$-commute or an $\overleftarrow{e}$-commute
(as defined in \Sr{seccom}) from $x$ to $y$, according to whether $e$ is directed from $x$ to $y$ or the other way round.
Thus, applying \Tr{thcom} \ref{ii} with $A=\{x,y;e\}$ and $B=G - \{e\}$ yields that the expected value $\ce_e$ of
$|I_e^1| +|I_e^2|$ is $2m \ell(e)$. Since the time interval $[0, \tau_{2|E(G)|}]$ is the union of all such pairs of intervals,
one pair for each $e\in E(G)$, we have
$$
\Ex \tau_{2|E(G)|} = \sum_{e\in E(G)} \ce_e = \sum_{e\in E(G)}2m \ell(e) = 2m^2.
$$
This yields $\Ex \CR^A_r({\overrightarrow N}) \leq 2m^2$, thus also $\Ex \CR^E_r(N) \leq 2m^2$, as claimed. 

{\em Case \ref{cii}:} To bound $\Ex \CR^A_r(N)$, we follow the same arguments as before, except that we define the
epochs slightly differently: the edges of \g are not directed now, and we always let $\tau_i$ be the first time
after $\tau_{i-1}$ when our random walk is at $y_i$ and has gone there from $x_i$ using the edge $e_i$ in that step.
We define the time intervals $I_e^1$ and $I_e^2$ as above, but this time we note that the motion of the random walker
in the union of these two intervals is an \andcom{e}. Applying \Tr{thcom} \ref{iii} with $A=\{x,y;e\}$ and $B=G - \{e\}$,
we obtain $\ce_e \leq 2m \ell(e) \frac{3 \ell(e) + R_B}{2 \ell(e) + R_B}$. This expression attains its maximum
value when $R_B=0$, and so we obtain $\ce_e \leq 2m \ell(e) \frac{3\ell(e) }{2\ell(e) } = 3m \ell(e)$.
Adding up all $\ce_e, e\in E(G)$ as above we conclude that $\Ex \CR^A_r(N) \leq 3m^2$, as claimed.
\end{proof}

The bounds of \Tr{main} are tight in the following situations.  For {\em Case \ref{ci}} and thus also {\em \ref{ciii}}, we have already noted
that a path of length $m$ takes time $2m^2$ to cover all edges and return.  For {\em Case \ref{cii}}, a network
consisting of a single vertex and a loop (of any length $m$) takes one step to cover that loop in one
direction, then on average two more to catch the other direction, so for this network $\Ex \CR^A = 3m^2$.

\section{Application to Brownian Motion and Infinite Networks}\label{secBM}

\subsection{Finite networks} \label{secBMf}

We begin this section by showing directly that (expected) edge-cover-and-return time---and, indeed, any kind of return time---is the
same in the Brownian model of random walk as it is in the $\ell^2$ model as defined in \Sr{secD}.  Fix a network $N$ and suppose that vertex $x$ of $N$ has incident edges
$e_1,\dots,e_k$ of lengths $\ell_1,\dots,\ell_k$ respectively.  (As usual loops must be represented twice in this list, once in each direction.)
The mean time taken by a walk in the $\ell^2$ model to traverse one of these edges, starting from $x$, is easily calculated using \eqref{trpr}:
$$
\sum_{i=1}^k  \frac{1}{\ell_i} \frac{1}{C_x} \ell_i^2 = \frac1{C_x} \sum_{i=1}^k \ell_i
$$
where $C_x := \sum_{j=1}^k (1/\ell_j)$.   The same is true in the Brownian model, because we may
identify the endpoints of the edges, calling the unified vertex $y$, and compute the commute time between $x$ and $y$
using the formula from Section~\ref{seccom}, which holds in both models.  By symmetry, the desired quantity is
half the commute time.

Since the expected time taken by a cover-and-return tour in either model is the sum over the vertices of $N$ of the expected time 
spent exiting those vertices, this quantity is the same for both models.  Note, however, that this does not mean that the
expected time taken in the Brownian model for a {\em particular} cover-and-return tour is the sum of the squares of the
lengths of its edges, and indeed that is not generally the case.

To compute the latter we need to know what the expected edge-traversal times are in the Brownian model.  This is not
needed to apply our bounds, but the computation is easy and as far as we know, has not appeared elsewhere.  We make
use of a couple of simple (and known, but proved here as well) facts about Brownian motion.

\begin{lemma}\label{lemma0} 
Consider standard Brownian motion on the positive real half axis, and let $t_b$ be the time of the first visit to $\ell\in \R^+$, and $t_a$ the time of the last visit to 0 before  $t_b$. 
Then the expected value  $T_\ell$ of $t_b-t_a$ is $\ell^2/3$.
\end{lemma}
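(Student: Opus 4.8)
The plan is to identify the Brownian path on the time interval $[t_a,t_b]$ with a classical diffusion whose mean passage time is easy to compute, and then solve a one–dimensional ODE. Note first that, by definition, the particle is at $0$ at time $t_a$, lies strictly inside $(0,\ell)$ for $t\in(t_a,t_b)$ (it cannot return to $0$, since $t_a$ is the \emph{last} zero before $t_b$, and it cannot reach $\ell$ earlier, since $t_b$ is the \emph{first} visit to $\ell$), and is at $\ell$ at time $t_b$. By the classical last–exit decomposition of Brownian motion — equivalently, the Doob $h$–transform of Brownian motion killed on exiting $(0,\ell)$ by the harmonic function $h(x)=x$, or It\^o's excursion theory for the reflected process on $[0,\infty)$ — the law of $(B_{t_a+s})_{0\le s\le t_b-t_a}$ is exactly that of a three–dimensional Bessel process $R$ started at $0$ and stopped at $H_\ell:=\inf\{s:R_s=\ell\}$. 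Hence $T_\ell=\Ex_0[H_\ell]$, and by Brownian scaling it even suffices to treat $\ell=1$.

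The second step is the computation of $\Ex_0[H_\ell]$ by a first–passage ODE. Writing $u(x):=\Ex_x[H_\ell]$ for $x\in[0,\ell]$, the generator $\tfrac12 u''+\tfrac1x u'$ of $R$ gives $\tfrac12 u''+\tfrac1x u'=-1$ on $(0,\ell)$ with $u(\ell)=0$. Multiplying through by $2x^2$ rewrites this as $(x^2 u')'=-2x^2$, so $x^2 u'(x)=-\tfrac23 x^3+c$; finiteness of $u$ at the entrance boundary $0$ forces $c=0$, hence $u'(x)=-\tfrac23 x$ and $u(x)=\tfrac13(\ell^2-x^2)$. Therefore $T_\ell=u(0)=\ell^2/3$, as claimed.

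The only genuine obstacle is making the first step rigorous: $t_a$ is a last–exit time rather than a stopping time, so the strong Markov property does not apply directly, and one must invoke the last–exit decomposition properly (as in Revuz--Yor) or argue through It\^o's excursion measure for reflecting Brownian motion on $[0,\infty)$, observing that the first excursion attaining height $\ell$, read from its last zero onward, has precisely the BES$(3)$ law above. Everything after that identification is the routine ODE of the second paragraph. An entirely elementary alternative that sidesteps Bessel processes is to use $\Ex_0[t_b]=\ell^2$ for reflecting Brownian motion (solve $\tfrac12 v''=-1$, $v'(0)=0$, $v(\ell)=0$, giving $v(x)=\ell^2-x^2$) together with an excursion–theoretic evaluation of $\Ex_0[t_a]=\tfrac23\ell^2$; the bookkeeping there is messier, so I would keep the Bessel–process formulation as the primary argument.
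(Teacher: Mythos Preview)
Your proof is correct: the identification of the post-$t_a$ path with a BES$(3)$ process run until it hits $\ell$ is a standard consequence of the last-exit (Williams) decomposition or of It\^o's excursion theory, and your ODE computation of $\Ex_0[H_\ell]=\ell^2/3$ is clean and right.

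The paper, however, takes a considerably more elementary route that avoids Bessel processes, $h$-transforms and excursion measure entirely. It first invokes Brownian scaling to write $T_\ell=\alpha\ell^2$ for some constant $\alpha$, and then sets up a self-similar recursion by stopping at the first hitting time $t_c$ of $\ell/2$ (a genuine stopping time, so only the ordinary strong Markov property is needed). From $\ell/2$ the particle reaches $0$ or $\ell$ first with probability $1/2$ each, in expected time $\ell^2/4$; if it reaches $\ell$ first then $t_b-t_a$ decomposes as the $T_{\ell/2}$-piece before $t_c$ plus the $\ell^2/4$ piece after, while if it reaches $0$ first the experiment restarts. This yields $T_\ell=\tfrac12(T_{\ell/2}+\ell^2/4)+\tfrac12 T_\ell$, i.e.\ $\alpha\ell^2=\alpha\ell^2/4+\ell^2/4$, giving $\alpha=1/3$.

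What each approach buys: the paper's argument is self-contained and needs no outside results beyond scaling and the exit-time formula for BM on an interval, so it fits the ``elementary facts'' tone of the section; your argument gives a structural explanation of where $\ell^2/3$ comes from (it is the BES$(3)$ passage time) and generalises immediately to other heights or starting points, at the cost of importing the last-exit decomposition, which --- as you rightly flag --- is the one nontrivial ingredient since $t_a$ is not a stopping time.
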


\begin{proof} From scaling properties of Brownian motion (see, e.g., \cite{MP2}) we know that $T_\ell$ must be a multiple of $\ell^2$,
say $\alpha \ell^2$.  Then $T_{\ell/2}$ is $\alpha \ell^2/4$. Consider the first time $t_c$ that the particle reaches the point $\ell^2$. From there, it takes expected time $\ell^2/4$ to reach either 0 or $\ell$ for the first time again, reaching each of them first with equal probability $1/2$. Now conditioning on the event that $\ell$ was reached before 0 after $t_c$, we expect $t_b-t_a$ to be $T_{\ell/2} + \ell^2/4$, while if 0 was reached first then the experiment is effectively restarted, and we expect $t_b-t_a$ to be its overall expectation $T_\ell$. Combining we get $T_\ell = 1/2 (T_{\ell/2} + \ell^2/4) + 1/2 T_\ell$. Plugging in the above formulas for  $T_\ell$ and $T_{\ell/2}$ we can now solve for $\alpha$, and we get $\alpha = 1/3$.
\end{proof}

\newcommand{\T}{\mathcal{T}}
\begin{lemma}\label{lemmaT} Suppose a Brownian particle begins at vertex $x$ in a network $N$ and proceeds until it traverses one of the incident
edges $e_1,\dots,e_k$, and let $\T$ be the (random) time spent before the particle departs $x$ for the last time.  
Then $\T$ is independent of the index of the edge traversed.
\end{lemma}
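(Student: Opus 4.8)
The plan is to analyze what happens locally at the vertex $x$ before the first edge-traversal, exploiting the "Walsh spider" description of Brownian motion near $x$ that is already invoked in Section~\ref{secD}. When the particle sits at $x$, the incident edges $e_1,\dots,e_k$ (loops counted twice) form a spider with legs of lengths $\ell_1,\dots,\ell_k$, and the particle performs excursions into these legs. A traversal of $e_i$ is completed when an excursion into leg $i$ reaches the far endpoint of $e_i$ before returning to $x$. The quantity $\T$ is the total time elapsed up to and including the start of the final (successful) excursion, i.e. the last departure from $x$. I want to show the conditional law of $\T$ given $\{$the traversed edge is $e_i\}$ does not depend on $i$.

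First I would decompose $\T$ into two independent pieces: the time $\T_0$ spent in all the failed excursions (those that return to $x$) plus the idle time at $x$, and then note that the identity of the successful edge is determined only by which leg the final excursion enters. Concretely, model the sequence of excursions: each time the particle leaves $x$ it picks leg $i$ with probability $p_i \propto 1/\ell_i$ (this is exactly \eqref{trpr}), and that excursion "succeeds" (reaches the endpoint of $e_i$) with some probability $s_i$ depending only on $\ell_i$ — in fact for a single leg of length $\ell_i$, starting Brownian motion from $x$ reaches the far end before returning to $x$ with a probability that, after the usual regularization (start an $\varepsilon$-step in), is proportional to $1/\ell_i$, matching $p_i$ up to the spider normalization. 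The excursions are i.i.d. in type, so $\T$ is a geometric-type sum: $\T = \sum_{j=1}^{M} D_j + (\text{holding times at }x)$, where $D_1,D_2,\dots$ are the durations of the successive failed excursions and $M$ is the number of excursions before the first success. The key observation is that the distribution of a failed excursion's duration $D_j$ is the duration of an excursion conditioned on returning to $x$, and — crucially — conditionally on $\{M = \text{that excursion's index is the first success}\}$ the durations of the preceding failed excursions depend only on \emph{which} legs those failed excursions entered, not on which leg finally succeeds. Hence, by the strong Markov property at $x$, $\T$ and the identity of the successful leg are conditionally independent given $M$ and the sequence of visited legs; and since the law of the failed-excursion process is the same regardless of which leg ultimately succeeds, $\T$ is unconditionally independent of the index of the traversed edge.

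The cleanest way to package this is via the strong Markov property: let $L_1,L_2,\dots \in \{1,\dots,k\}$ be the legs chosen on successive departures from $x$, i.i.d. with $\Pr(L_j = i) = p_i$; given $L_j = i$, the excursion is "successful" with probability $\sigma_i$ and, independently of success/failure status of any \emph{other} excursion, its duration has the law of a leg-$i$ excursion. Then $\{$edge $e_i$ traversed$\}$ is the event that the first successful excursion has $L = i$; by a standard computation the identity of the successful leg has distribution $\propto p_i\sigma_i$, and is independent of the number $M$ of prior excursions and of the multiset of legs those excursions entered — because we can realize everything by first sampling an i.i.d. sequence of (leg, duration) pairs together with independent $\mathrm{Bernoulli}(\sigma_{L_j})$ success flags, and $\T$ is a measurable function of the pairs indexed $j < M$ together with the holding times, while the traversed edge is $L_M$. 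Since the pre-$M$ data and $L_M$ are independent under this construction, $\T \perp L_M$, which is the claim.

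The main obstacle I anticipate is making the "regularized excursion" formalism rigorous: Brownian motion started exactly at the branch point $x$ of the spider immediately makes infinitely many microscopic excursions, so "the number of excursions before the first success" and "the duration of a failed excursion" are not literally well defined without a cutoff. The fix is the standard one — work with the process started an $\varepsilon$ away inside one leg, or use excursion theory / local-time-at-$x$ to parametrize excursions — and check that $\T$ (which includes all the short excursions and the holding time, and is a.s. finite with finite mean by the commute-time computation in Section~\ref{seccom}) has the asserted conditional-independence structure in the $\varepsilon$-regularized chain, then let $\varepsilon \to 0$. Everything else — the geometric structure, Wald-type independence, and the proportionality of success probabilities to $1/\ell_i$ — is routine once the excursion bookkeeping is set up, and in fact the paper only needs the qualitative independence statement, not any explicit formula, so no delicate estimates are required beyond the a.s. finiteness of $\T$.
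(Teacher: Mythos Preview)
Your approach is correct, but it is considerably more elaborate than the paper's. The paper's entire proof is two sentences: since the past is irrelevant to the particle, the conditional distribution of the traversed edge given the path up to the last departure from $x$ (and in particular given $\T$) is a fixed distribution $\sigma$ on $\{1,\dots,k\}$; hence the index is independent of $\T$, and vice versa. Your excursion decomposition, geometric-trial model, and $\varepsilon$-regularization all serve to make this one-line Markov argument explicit, and in that sense your argument is the rigorous content underlying the paper's informal ``past is irrelevant'' claim. What you gain is explicitness and a clear view of where the regularization issue lives; what the paper gains is brevity, since only the qualitative independence is needed and the Markov heuristic is already convincing at that level.

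One small slip worth noting: in the Walsh spider model used in the paper, each microscopic departure from $x$ enters each of the $k$ legs with \emph{equal} probability $1/k$, not with probability proportional to $1/\ell_i$ as you wrote; the weighting $\propto 1/\ell_i$ enters only through the success probability $\sigma_i$ (the $\varepsilon$-regularized chance of reaching the far end of leg $i$ before returning to $x$). This does not affect your independence argument, which goes through for any choice of $p_i$ and $\sigma_i$, but it is worth getting right since the resulting distribution of the traversed edge is exactly the $\sigma \propto 1/\ell_i$ that the paper invokes.
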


\begin{proof} Since the past is irrelevant to the particle, there is a fixed distribution $\sigma$ on $\{1,2,\dots,k\}$ for
which edge is traversed after the particle departs $x$ for the last time (namely, the distribution whose probabilities are
proportional to the reciprocals of the edge-lengths).  Thus, the index of the traversed edge is independent of $\T$, and
necessarily, vice-versa.
\end{proof}

We are now ready to derive our formula.  Lemma~\ref{lemmaT} implies that in particular $\Ex\T$ is independent of which edge
is traversed.  Combining with Lemma~\ref{lemma0}, the expected time taken by the particle to traverse an edge from $x$,
given that it traversed edge $e_i$ first, is $\Ex\T + \ell_i^2/3$.  It follows that the expected time to traverse {\em some} edge
from $x$ is
$$
\sum_{i=1}^k \left( \Ex\T + \ell_i^2/3 \right) \frac{1}{\ell_i C_x} 
$$
which we know must be equal to $\frac1{C_x} \sum_{i=1}^k \ell_i$.  Solving gives $\Ex\T = \frac23 \frac{1}{C_x} \sum_{i=1}^k \ell_i$, and we have proved:

\begin{theorem} Suppose a standard Brownian particle on a network $N$ begins at vertex $x$ with incident edges $e_1,\dots,e_k$
of lengths $\ell_1,\dots,\ell_k$ respectively.  Then the expected time taken by the particle to traverse edges $e_i$, given
that it traversed $e_i$ first, is $\frac13 \ell_i^2 + \frac23 \sum_{j=1}^k \ell_j $.
\end{theorem}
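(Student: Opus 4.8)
The plan is to combine the two preceding lemmas with the averaging identity already established, essentially assembling pieces that are all in hand. Lemma~\ref{lemmaT} tells us that the random time $\T$ spent at $x$ before the particle departs for the last time is independent of \emph{which} incident edge is eventually traversed; in particular $\Ex\T$ does not depend on that index. Lemma~\ref{lemma0}, applied to the edge $e_i$ viewed as a copy of the interval $[0,\ell_i]$ on the half-line (with the rest of the network hanging off $x$), says that once the particle leaves $x$ for the last time along $e_i$, the expected time to reach the far endpoint is $\ell_i^2/3$. Adding these, the expected traversal time of $e_i$ \emph{conditioned on $e_i$ being the edge traversed first} is $\Ex\T + \ell_i^2/3$, and it remains only to pin down $\Ex\T$.

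\medskip

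\noindent\textbf{Key steps.} First I would recall from the displayed computation earlier in this subsection that the unconditional expected time to traverse \emph{some} edge from $x$ equals $\frac{1}{C_x}\sum_{j=1}^k \ell_j$, where $C_x = \sum_{j=1}^k 1/\ell_j$ --- this follows from the $\ell^2$-model calculation together with the fact that commute times (hence exit times) agree in the two models, as shown in Section~\ref{seccom}. Second, I would write the same unconditional expectation by conditioning on the first-traversed edge: since $e_i$ is traversed first with probability $\frac{1}{\ell_i C_x}$ (from \eqref{trpr}), this gives
$$
\frac{1}{C_x}\sum_{i=1}^k \ell_i \;=\; \sum_{i=1}^k \left(\Ex\T + \tfrac13 \ell_i^2\right)\frac{1}{\ell_i C_x}.
$$
Third, I would solve this linear equation for $\Ex\T$: the right-hand side is $\Ex\T \cdot \frac{1}{C_x}\sum_i \frac{1}{\ell_i} + \frac{1}{3C_x}\sum_i \ell_i = \Ex\T + \frac{1}{3C_x}\sum_i \ell_i$, so $\Ex\T = \frac{2}{3C_x}\sum_{j=1}^k \ell_j$. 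Finally, substituting back into $\Ex\T + \ell_i^2/3$ yields the claimed formula $\frac13\ell_i^2 + \frac23 \sum_{j=1}^k \ell_j$ once one absorbs the $1/C_x$ --- wait, one must be careful here: the stated answer has $\frac23\sum_j \ell_j$ with no $C_x$, so the correct reading is that $\Ex\T = \frac23 \cdot \frac{1}{C_x}\sum \ell_j$ only when the problem is normalized so that $C_x$ is folded in; I would double-check the normalization against the earlier display $\sum_{i=1}^k \frac{1}{\ell_i}\frac{1}{C_x}\ell_i^2 = \frac{1}{C_x}\sum_i\ell_i$ to make sure the conditioning weights and the target expression are consistent, and present the conclusion exactly as the earlier paragraph derives it.

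\medskip

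\noindent\textbf{Main obstacle.} The only genuinely delicate point is justifying the decomposition ``expected traversal time $=\Ex\T + \ell_i^2/3$'' rigorously: one must argue that, conditioned on $e_i$ being the first edge traversed, the post-last-departure motion along $e_i$ is distributed exactly as standard Brownian motion on $[0,\ell_i]$ run from $0$ and conditioned to hit $\ell_i$ before returning to $0$ --- i.e., that Lemma~\ref{lemma0}'s quantity $t_b - t_a$ is precisely the relevant conditional traversal time, and that it is independent of $\T$. This independence is exactly what Lemma~\ref{lemmaT} (via the strong Markov property / memorylessness of the excursion structure at $x$) supplies, so the argument is really just citing the two lemmas in the right order; the algebra to extract $\Ex\T$ is routine. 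I would therefore keep the proof to three or four sentences, leaning on Lemmas~\ref{lemma0} and~\ref{lemmaT} and the averaging identity, and not belabor the Brownian-excursion bookkeeping.
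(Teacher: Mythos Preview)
Your approach is exactly the paper's: invoke Lemma~\ref{lemmaT} to see that $\Ex\T$ is independent of the traversed index, add the $\ell_i^2/3$ from Lemma~\ref{lemma0}, average against the probabilities $1/(\ell_i C_x)$, set this equal to the known unconditional exit time $\frac{1}{C_x}\sum_j \ell_j$, and solve for $\Ex\T$. The paper does precisely this in the paragraph preceding the theorem.

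Your ``wait'' is warranted and is not a flaw in your argument but in the theorem's displayed formula. The derivation (yours and the paper's) gives $\Ex\T = \frac{2}{3C_x}\sum_j \ell_j$, so the conditional traversal time is $\frac13 \ell_i^2 + \frac{2}{3C_x}\sum_j \ell_j$; the factor $1/C_x$ is missing from the statement as printed. The paper's own worked example immediately after the theorem confirms this: with edges of lengths $1$ and $2$ at $x$ one has $C_x = 3/2$, and the stated value $5/3$ for traversing $e$ is $\frac13\cdot 1 + \frac{2}{3}\cdot\frac{2}{3}\cdot 3 = \frac13 + \frac43$, not $\frac13 + \frac23\cdot 3 = \frac73$. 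So do not try to ``absorb'' $C_x$ by some normalization; simply report the formula with the $1/C_x$ present.
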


A simple example of an edge-cover-and return tour with different expected times for the two models takes place on the network $N$ consisting
of two vertices $x$ and $y$, connected by an edge $e$ of length 1 and an edge $f$ of length 2.  Then the cover tour from $x$
consisting of $e$ then $f$ has expected time $5/3 + 8/3 = 13/3$ in the Brownian model, but constant time $1^2 + 2^2 = 9$
in the $\ell^2$ model.  The expected time for an edge-cover-and-return tour on $N$ is 7.7 in either model, although when
conditioned on (say) having started with edge $e$, the results are quite different.

\subsection{Infinite networks of finite total length} \label{secBMi}

We conclude with some implications for Brownian motion on infinite networks. 

As an illustrative example, consider the infinite binary tree $T$, with edge-lengths $4^{-k}$ at
level $k$ (the edges incident to the root counting as level 1). To this network of total length 1
it is natural to append a {\em boundary} $\partial T$: considered as a metric space, $T$ has a metric
completion $|T|$ and $\partial T$ is the set of completion points. An equivalent way to define
$\partial T$ is as the set of infinite paths starting at the root of $T$, which admits a
natural bijection to the set of infinite binary sequences. Note that $\partial T$ is homeomorphic to the Cantor set.

It is possible to prove that starting at the root of $T$, Brownian motion or random walk in the $\ell^2$ model
will almost surely reach `infinity', i.e.\ $\partial T$, after finite time.
Georgakopoulos and Kolesko \cite{GK} show that it is possible to let the particle continue its random motion afterwards:
they construct a random process on $|T|$ whose sample paths are continuous \wrt\ the topology of $|T|$ and behave like standard
Brownian motion in the neighborhood of each vertex of $T$. This construction was motivated by the results of \cite{G}
and an attempt to extend the theory of \cite{DS}, relating electrical networks and random processes, to the infinite case. 

Applied in this context, our results have a somewhat surprising implication: Brownian motion on $|T|$ will cover all edges
of $T$---and all of the continuum-many boundary points $\partial T$---in expected time at most 2, thus almost surely in finite time.
(We proved our results here for finite networks only, and so they cannot be directly applied to infinite ones.
However, the Brownian motion of \cite{GK} is constructed as a limit of the Brownian motions on an increasing sequence
of finite subgraphs of $T$, and as our results apply to each member of this sequence, they can be extended to the limit; see \cite{GK} for details.)

In fact, the Brownian motion of \cite{GK} is defined not only for trees, but also for arbitrary networks of finite total length.
It was shown in \cite{Gell} that every such network admits a boundary as above.  Moreover, this boundary, with its corresponding
topology, is well known and has had many applications.  To these results we have here added a (tight) bound on expected cover time,
perhaps adding further impetus to the study of networks of finite total length.

\section{Acknowledgments}

The portions of this research concerning Brownian motion benefited greatly from communications with Steve Evans (U.C. Berkeley) and with Yuval Peres and David B. Wilson (Microsoft Research).


\begin{thebibliography}{10}

\bibitem{AF} D. Aldous and J. Fill, {\em Reversible Markov Chains and Random Walks on Graphs,}
book in preparation, http://www.stat.berkeley.edu/~aldous/RGW/book.html (as of 2011).

\bibitem{AKLLR} R. Aleliunas, R. Karp, R. Lipton, L. Lov\'{a}sz and C. Rackoff, Random walks,
universal traversal sequences, and the complexity of maze traversal, {\em Proc.\ 20th IEEE Symp.\
Found.\ Computer Science} (1979), 218--233.

\bibitem{BC} J. R. Baxter and R. V. Chacon, The equivalence of diffusions on networks to Brownian motion,
{\em Contemp.\ Math.} {\bf 26} (1984), 33--47.

\bibitem{BPY} M. T. Barlow, J. W. Pitman and M. Yor, On Walsh's Brownian motions, {\em S\'{e}minaire
de Probabilit\'{e}s (Strasbourg)} {\bf 23} (1989), 275--293.

\bibitem{BW1} {G. R. Brightwell and P. Winkler}, Maximum hitting time for random walks on graphs,
{\em J. Random Structures and Algorithms} {\bf 1} \#3 (1990), 263--276.

\bibitem{BW2} {G. R. Brightwell and P. Winkler}, Extremal cover time for random walks on trees,
{\em J. Graph Theory} {\bf 14} \#5 (1990), 547--554.

\bibitem{Br} A. Broder, Universal sequences and graph cover times; a short survey, {\em Sequences}
(Naples/Positano, 1988) 109--122, Springer, New York, 1990.

\bibitem{B} E. R. Bussian, {\em Bounding the Edge Cover Time of Random Walks on Graphs,} PhD Thesis,
Dept.\ of Mathematics, Georgia Institute of Technology (1996),
http://www.dtic.mil/cgi-bin/GetTRDoc?AD=ADA311706\&Location=U2\&doc=GetTRDoc.pdf.

\bibitem{CRRST} A. K. Chandra, P. Raghavan, W. L. Ruzzo, R. Smolensky and P. Tiwari, The electrical resistance
of a graph captures its commute and cover times, {\em Proc.\ 21st ACM Symp.\ Theory of Computing} (1989), 574--586.

\bibitem{DLP} G. Ding, J. Lee and Y. Peres, Cover times, blanket times, and majorizing measures,
http://arxiv.org/abs/1004.4371.

\bibitem{DPRZ} A. Dembo, Y. Peres, J. Rosen and O. Zeitouni, Cover times for Brownian motion and random walks
in two dimensions, {\em Acta.\ Math.} {\bf 186} \#2 (2001), 239--270.

\bibitem{DS} P. Doyle and J. L. Snell, {\em Random Walks and Electrical Networks,} Mathematical Association
of America, 1984.

\bibitem{F2} U. Feige, A tight upper bound on the cover time for random walks on graphs,
{\em Random Structures \& Algorithms} {\bf 6} \#1 (1995).

\bibitem{FD} H. F. Frank and S. Durham, Random motion on binary trees, {\em J. Appl.\ Prob.} {\bf 21} (1984), 58--69.

\bibitem{G} A. Georgakopoulos, Uniqueness of electical currents in a network of finite total resistance,
{\em J.\ London Math.\ Soc.} {\bf 82} \#1 (2010), 256--272.

\bibitem{Gell} A.~Georgakopoulos, {G}raph topologies induced by edge lengths.
\newblock In Infinite Graphs: Introductions, Connections, Surveys. Special issue of 
{\em Discrete Math} {\bf 311} (2011), 1523--1542.

\bibitem{GK} A. Georgakopoulos and K. Kolesko, In preparation.

\bibitem{KLNS} J. D. Kahn, N. Linial, N. Nisan and M. E. Saks, On the cover time of random walks on graphs,
{\em J. Theoretical Probability} {\bf 2} (1989), 121--128.

\bibitem{KR} A. Karlin and P. Raghavan, Random walks and undirected graph connectivity; a survey, {\em Discrete
Probability and Algorithms} (Minneapolis 1993), 95--101, IMA Vol.\ Math.\ Appl.\ {\bf 72}, Springer, New York 1995.

\bibitem{L} R. Lyons with Y. Peres, {\em Probability on Trees and Networks,} Cambridge University Press, to appear;
2011 draft at \verb+http://mypage.iu.edu/~rdlyons/prbtree/book.pdf+.

\bibitem{MP} M. Mihail and C. Papadimitriou, On the random walk method for protocol testing, {\em Computer Aided Verification}
(Stanford, CA), 132--141, Lecture Notes on Comp.\ Sci.\ {\bf 818}, Springer, Berlin 1994.

\bibitem{MP2} P. M\"{o}rters and Y. Peres, {\em Brownian Motion}, Cambridge University Press, Cambridge 2010.

\bibitem{T} P. Tetali, Random walks and the effective resistance of networks, {\em J. Theoretical Prob.} {\bf 4} (1991) 101--109.

\bibitem{V} N. Th. Varopoulos, Long range estimations for Markov chains, {\em Bull.\ Sci.\ Math.} {\bf 109} (1985) 225--252.

\bibitem{W} A. Wald, On cumulative sums of random variables, {\em Ann.\ Math.\ Stat.} {\bf 15} \#3 (Sept.\ 1944), 283--296,
doi:10.1214/aoms/1177731235. JSTOR 2236250. MR10927. Zbl 0063.08122. 

\bibitem{Wa} J. Walsh, A diffusion with discontinuous local time, in {\em Temps Locaux, Ast\'{e}risque} {\bf 52-53} (1978), 37--45.

\bibitem{WZ} P. Winkler and D. Zuckerman, Multiple cover time, {\em Random Structures \& Algorithms} {\bf 9}
(1996), 403--411.

\bibitem{Z1} D. Zuckerman, Covering times of random walks on bounded-degree trees and other graphs,
{\em J. Theoretical Prob.} {\bf 2} \#1 (1989) 147--157.

\bibitem{Z2} D. Zuckerman, On the time to traverse all edges of a graph, {\em Information Processing Letters}
{\bf 38} (1991) 335--337.

\end{thebibliography}
\end{document}